\documentclass{amsart}
\usepackage[utf8]{inputenc}
\usepackage{color}

\usepackage[margin=1.0in]{geometry}

\usepackage{amsmath}
\usepackage{amsthm}
\usepackage{amssymb}
\usepackage[matrix,arrow,curve]{xy}
\usepackage{enumitem} 
\usepackage{graphicx}
\usepackage{mdwlist}	
\usepackage{mathrsfs}
\usepackage{tikz}
\usetikzlibrary{patterns}
\usepackage{caption}
\usepackage{subcaption}
\usepackage{makecell}
\usepackage{ textcomp }

\newcommand\setItemnumber[1]{\setcounter{enum\romannumeral\@enumdepth}{\numexpr#1-1\relax}}

\newcommand{\p}{\mathbb{P}}
\DeclareMathOperator{\Bir}{Bir}
\DeclareMathOperator{\Aut}{Aut}

\DeclareMathOperator{\PGL}{PGL}
\DeclareMathOperator{\GL}{GL}

\DeclareMathOperator{\Spec}{Spec}

\renewcommand{\Spec}{\mathrm{Spec}}

\DeclareMathOperator{\CC}{\mathbb{C}}
\DeclareMathOperator{\barS}{\overline{S}}

\newcommand{\xdashrightarrow}[2][]{\ext@arrow 0359\rightarrowfill@@{#1}{#2}}

\newtheorem{theorem}[equation]{Theorem}
\newtheorem{lemma}[equation]{Lemma}

\theoremstyle{definition}
\newtheorem{defin}[equation]{Definition}

\newtheorem{example}[equation]{Example}

\makeatletter
\@addtoreset{equation}{section}
\makeatother

\title{Automorphisms of quasi-projective surfaces over fields of finite characteristic}
\author{Alexandra Kuznetsova}

\address{
National research university Higher School of economics, Russia, Usacheva str. 6, 119048;
\'Ecole Polytechnique, France, CMLS, Route de Saclay, 91128 Palaiseau.}

 \email{sasha.kuznetsova.57@gmail.com}

\begin{document}
\begin{abstract}
 We prove that the group of automorphisms of any quasi-projective surface $S$ in finite characteristic has the $p$-Jordan property.
\end{abstract}

\maketitle
\section{Introduction}
Groups of regular and birational automorphisms of algebraic varieties are very interesting to study. However, some of them
are immense, they can have infinite dimension or infinite number of connected components. One approach is to study the structure of their finite subgroups. 
J.-P. Serre in \cite{Serre_Cremona2} proved that any finite subgroup of the Cremona group of rank~$2$, i.e. the group of birational automorphisms of $\p^2$, has 
a normal abelian subgroup of bounded index. Then he conjectured that Cremona group of any rank satisfy the same property:
\begin{defin}
 A group $\Gamma$ satisfies the Jordan property if there exists a number $J$ 
 such that for any finite subgroup $G\subset \Gamma$ we can find a normal abelian subgroup $A\subset G$ of index less than or equal to~$J$.
\end{defin}
This property is named after C. Jordan who showed that it holds for $\GL_n(\CC)$,  see~\mbox{\cite[Theorem 36.13]{Jordan_thm}.}
Serre's conjecture motivated the study of the Jordan
property for regular and birational automorphism groups for many different varieties.

A foundational statement of this type was proved by V. Popov in \cite[Theorem 2.32]{Popov_def_Jordan}: he showed that 
in characteristic zero the group of birational automorphisms of any surface $S$ 
satisfies the Jordan property for all but one birational class of surfaces. Also S. Meng and D.-Q. Zhang in \cite{Meng-Zhang} showed that the Jordan property 
holds for groups of regular automorphisms of all projective varieties over a field of characteristic zero. 
Finally, Yu. Prokhorov and C. Shramov in \cite{PS_Jordan_property} proved the Jordan property for Cremona groups of all ranks assuming
the Borisov–Alexeev–Borisov conjecture which was later proved by C. Birkar in~\cite{Birkar_BAB}.

Another remarkable result in this area is due to T. Bandman and Yu. Zarhin \cite{Bandman-Zarhin_1}: they showed that 
groups of automorphisms of quasi-projective surfaces over a field of characteristic 0 have
the Jordan property.

An interesting question arises: can these results be extended to finite characteristic?
Many groups defined over fields of finite characteristic does not satisfy the Jordan property.
In fact, this property is not true even for the group~\mbox{$\GL_n(\overline{\mathbb{F}}_p)$.}
In view of this F. Hu suggested the following analogue of the Jordan property:
\begin{defin}[{\cite[Definition 1.6]{Fei_Hu}}]\label{def_p-Jordan}
 We say that the group $\Gamma$ is $p$-Jordan, if there exist constants~$J(\Gamma)$ and $e(\Gamma)$ depending only on $\Gamma$
 such that any finite subgroup $G\subset \Gamma$ contains a normal abelian subgroup~$A$ and 
 \begin{equation*}
  [G:A]\leqslant J(\Gamma)\cdot |G_p|^{e(\Gamma)},
 \end{equation*}
 where $G_p$ is a Sylow $p$-subgroup of $G$.
\end{defin}
This definition is a slight modification of the standard Jordan property and gives us some information about finite subgroups of the whole group
whose order is coprime with $p$. Definition \ref{def_p-Jordan} was motivated by a theorem proved by M. J. Larsen and R. Pink \cite{Larsen_Pink} 
which asserts  that the $p$-Jordan property holds for the group~\mbox{$\GL_n(\overline{\mathbb{F}}_p)$}, 
see also \cite{Brauer_Feit}.
Then F.Hu generalized the result by S. Meng and D.-Q. Zhang to finite characteristic:
\begin{theorem}[{\cite[Theorems 1.7, 1.10]{Fei_Hu}}]\label{thm_Hu}
 Let $k$ be a field of characteristic $p>0$.
 Then for any projective variety $X$ defined over~$k$ the group $\Aut(X)$ is $p$-Jordan.
\end{theorem}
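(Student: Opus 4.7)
The plan is to follow the strategy used by Meng and Zhang in characteristic zero, substituting the Larsen--Pink theorem for the classical Jordan theorem wherever finite subgroups of general linear groups appear. The starting point is the automorphism group scheme $\mathrm{Aut}_X$ of the projective variety $X$, which sits in an exact sequence
\begin{equation*}
1\to \Aut^0(X)(k)\to \Aut(X)\to \pi_0(\Aut_X)\to 1,
\end{equation*}
so it suffices to prove that both outer terms are $p$-Jordan and to verify that the $p$-Jordan property is preserved under such extensions (with explicit control on the constants $J$ and $e$).

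For the component group, I would use a characteristic-$p$ version of Lieberman's argument: the action of $\Aut(X)$ on $\NS(X)$ modulo torsion has kernel whose intersection with the stabilizer of an ample class is of finite index in $\Aut^0(X)(k)$. This yields, up to finite index, an embedding of $\pi_0(\Aut_X)$ into $\GL(\NS(X)/\mathrm{tors})\cong\GL_n(\mathbb{Z})$; Minkowski's lemma then gives a uniform bound on finite subgroups and hence the (honest, not just $p$-) Jordan property for the component group.

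For $\Aut^0(X)(k)$, which is the group of $k$-points of a connected algebraic group $G/k$, I would apply Chevalley's structure theorem to produce an extension $1\to L\to G\to A\to 1$ with $L$ linear and $A$ an abelian variety. Finite subgroups of $A(k)$ are contained in the torsion of $A(k)$, and the prime-to-$p$ torsion of $A(\overline k)$ is a product of at most $2\dim A$ cyclic groups, so its finite subgroups are abelian and controlled; the $p$-primary torsion is absorbed into the $|G_p|^{e}$ factor of Definition~\ref{def_p-Jordan}. For the linear part $L$, after choosing a faithful representation $L\hookrightarrow \GL_n(\overline k)$, the theorem of Larsen--Pink directly supplies the $p$-Jordan property. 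Combining the two via the Chevalley extension, and then with the component group, completes the proof.

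The main obstacle I expect is the bookkeeping around \emph{extensions}: given $1\to N\to G\to Q\to 1$ with $N$ and $Q$ both $p$-Jordan, one must produce a normal abelian subgroup of $G$ whose index is bounded by $J(G)\cdot|G_p|^{e(G)}$. In characteristic zero this step is routine because one only needs to intersect abelian subgroups of $N$ and $Q$ and take centralizers; in finite characteristic the presence of large unipotent $p$-subgroups of $L$ means that the constants $e(N)$ and $e(Q)$ interact multiplicatively, and one must verify that the exponent $e(G)$ obtained in this process depends only on $X$ and not on the specific finite subgroup chosen. Once this extension lemma is stated and proved carefully, the rest of the argument is a direct assembly of Chevalley's theorem, Larsen--Pink, Minkowski, and the Lieberman-type result on the kernel of the action on $\NS(X)$.
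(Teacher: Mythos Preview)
The paper does not give its own proof of this theorem: it is quoted verbatim from Hu's article and used as a black box. So there is nothing in the present paper to compare your proposal against.

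That said, your sketch is essentially the strategy of Hu's original proof, so it is a correct outline. Two places deserve more care than you indicate. First, over a field of characteristic $p$ the scheme $\mathrm{Aut}_X$ need not be smooth, so before invoking Chevalley's theorem one must pass to the algebraic closure and to the reduced subscheme $(\mathrm{Aut}_X^0)_{\mathrm{red}}$; this is harmless for the $p$-Jordan question but should be said. Second, the ``characteristic-$p$ version of Lieberman's argument'' is the genuinely nontrivial ingredient here: one needs that the stabilizer of an ample (or nef and big) class in $\NS(X)_{\mathbb{R}}$ has only finitely many connected components, and the standard proofs of this in characteristic zero use Hodge theory or the Fujiki--Lieberman theorem. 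In positive characteristic one instead appeals to Matsusaka's big theorem or to the boundedness results that Hu develops, and this is where most of the work in \cite{Fei_Hu} actually lies. Your identification of the extension lemma as the main bookkeeping step is accurate, but the hard content is the bound on $\pi_0$, not the extension.
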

The next interesting and natural question is whether we can prove the $p$-Jordan property for groups of birational automorphisms of 
projective varieties over a field of finite characteristic.
Recently, Y. Chen and C. Shramov studied this question and managed to generalize V. Popov's result \cite{Popov_def_Jordan} to finite characteristic:
\begin{theorem}[{\cite[Theorem 1.7]{Chen-Shramov}}]\label{thm_CS}
 Let $k$ be a field of characteristic $p>0$ and $S$ be an irreducible algebraic surface defined over $k$. 
 Then the group of birational automorphisms $\Bir(S)$ is $p$-Jordan unless $S$ is birational to the
 product $E\times \p^1$ of an elliptic curve $E$ and a projective line.
\end{theorem}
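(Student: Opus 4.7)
The plan is to imitate Popov's strategy from characteristic zero, replacing each input by its positive characteristic counterpart. First I would observe that $\Bir(S)$ is a birational invariant, so we may replace $S$ by any smooth projective model and then, using the MMP for surfaces in positive characteristic, by a minimal smooth projective surface. This lets us apply the Enriques--Bombieri--Mumford classification of minimal surfaces in characteristic $p$, splitting into cases according to Kodaira dimension.

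Next I would dispose of the cases $\kappa(S)\ge 0$ uniformly. If $S$ is of general type, $\Bir(S)=\Aut(S)$ is finite and there is nothing to prove. For $\kappa(S)\in\{0,1\}$ the minimal model is unique and every birational self-map extends to a biregular one, so $\Bir(S)=\Aut(S)$ and Theorem~\ref{thm_Hu} of Hu applies, giving the $p$-Jordan property directly. Thus the whole problem reduces to uniruled surfaces, i.e.\ surfaces birational to $C\times\p^1$ for some smooth curve~$C$.

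When $C=\p^1$ we are asking that the Cremona group $\Cr_2(k)$ itself be $p$-Jordan. Here I would run an equivariant MMP for any finite subgroup $G\subset\Cr_2(k)$ of order coprime to $p$ (the Sylow $p$-part is absorbed into the factor $|G_p|^{e}$ in Definition~\ref{def_p-Jordan}). This produces a $G$-equivariant Mori fibre space: either a del Pezzo surface of Picard rank~$1$ or a $G$-conic bundle. Boundedness of del Pezzo surfaces in positive characteristic bounds $G$ in the first case; in the second, one fibres by~$\p^1$ over~$\p^1$ and controls the induced action on the base and on the generic fibre via the $p$-Jordan property of $\PGL_2$ over the function field, which follows from Larsen--Pink. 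When $C$ has genus~$\ge 2$ the automorphism group $\Aut(C)$ is finite and the resulting group fits in a short exact sequence
\[
 1\longrightarrow \PGL_2(k(C))\longrightarrow \Bir(C\times\p^1)\longrightarrow \Aut(C)\longrightarrow 1,
\]
so Larsen--Pink applied to $\PGL_2$ over the function field together with finiteness of $\Aut(C)$ yields the $p$-Jordan property. The exceptional case $C=E$ an elliptic curve is precisely the one excluded: $\Aut(E)$ contains arbitrarily many translations, and combined with the many finite subgroups of $\PGL_2(k(E))$ one can exhibit finite subgroups of $\Bir(E\times\p^1)$ of order coprime to $p$ and unbounded abelian defect.

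The main obstacle I expect is the rational case, and specifically proving the $p$-Jordan property for $\Cr_2(k)$ by the equivariant-MMP route. Verifying the boundedness input for singular del Pezzo surfaces arising as $G$-Mori fibre spaces in positive characteristic is delicate, because the BAB conjecture in characteristic~$p$ is not available in the same form as Birkar's theorem, and wild inertia phenomena can obstruct a direct reduction to $\PGL_2(k(\text{base}))$ on conic bundles. All the other steps are either classical (MMP, Enriques--Kodaira) or reduce to Theorem~\ref{thm_Hu} and Larsen--Pink; the essential new content is packaged in the rational and elliptic--ruled dichotomy.
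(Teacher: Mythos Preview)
This theorem is not proved in the present paper: it is quoted from \cite{Chen-Shramov} as Theorem~\ref{thm_CS} and used as a black box (in Lemma~\ref{lemma_except_P1xE}). There is therefore no proof here against which to compare your proposal.

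That said, your outline is broadly the right shape for how the Chen--Shramov argument goes: pass to a smooth projective minimal model, use the Bombieri--Mumford classification, reduce $\kappa\ge 0$ to $\Aut$ of the minimal model and invoke Theorem~\ref{thm_Hu}, and for ruled surfaces over a curve $C$ use the extension
\[
 1\longrightarrow \PGL_2(k(C))\longrightarrow \Bir(C\times\p^1)\longrightarrow \Aut(C)
\]
together with Larsen--Pink for $\PGL_2$ over $k(C)$. Your honest flagging of the rational case as the hard step is accurate: the $p$-Jordan property for $\Cr_2(k)$ is the substantive new content of \cite{Chen-Shramov}, and your sketch of an equivariant MMP argument with del Pezzo/conic bundle outputs is indeed the route taken there, though the actual execution requires care you have not supplied (classification of finite subgroups acting on del Pezzo surfaces and conic bundles in characteristic $p$, handling the $p$-part, etc.). One minor overreach: your final sentence asserts that $\Bir(E\times\p^1)$ actually fails to be $p$-Jordan. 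That is not part of the statement you are asked to prove (the theorem merely excludes this case), and while the characteristic-zero analogue is known to fail the Jordan property, you should not claim the failure in characteristic $p$ without a construction.
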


In this paper we are going to study another question connected to this theme, namely, we consider groups of regular automorphisms of quasi-projective
surfaces over a field of finite characteristic. Our goal is to prove an analogue of T. Bandman and Yu. Zarhin theorem \cite{Bandman-Zarhin_1} in finite characteristic:
\begin{theorem}\label{thm_aut_of_qp_surfaces}
 If $S$ is a quasi-projective surface defined over a field $k$ of characteristic $p>0$, then the group $\Aut(S)$ is $p$-Jordan.
\end{theorem}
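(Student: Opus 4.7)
The plan is to reduce to the birational setting via compactification, apply Theorem~\ref{thm_CS}, and handle the one exceptional birational class by a direct analysis of the $\p^1$-fibration.

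First I would fix a smooth projective completion $\bar{S}\supset S$ with boundary divisor $D=\bar{S}\setminus S$; every element of $\Aut(S)$ extends uniquely to a birational self-map of $\bar{S}$ preserving $D$, yielding an embedding $\Aut(S)\hookrightarrow\Bir(\bar{S})$. Since the $p$-Jordan property is inherited by subgroups with the same constants, Theorem~\ref{thm_CS} immediately yields the conclusion unless $\bar{S}$ is birational to $E\times\p^1$ for some elliptic curve $E$. The whole content of the proof lies in this exceptional case.

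In the exceptional case I would pass to a relatively minimal ruled model so that $\bar{S}$ admits a $\p^1$-fibration $\pi\colon\bar{S}\to E$, which is unique and hence preserved by every element of $\Bir(\bar{S})$. This yields
\[
1\longrightarrow\PGL_2(k(E))\longrightarrow\Bir(\bar{S})\longrightarrow\Aut(E)\longrightarrow 1.
\]
The crucial point is that $D$ is $\Aut(S)$-stable; decompose it as $D=D_h\sqcup D_v$ into horizontal and vertical components. Given a finite subgroup $G\subset\Aut(S)$, after passing to a bounded-index subgroup one may assume $G$ fixes every irreducible component of $D$. If $D_h\neq\emptyset$, the generic fiber $\p^1_{k(E)}$ carries a nonempty finite $G$-fixed subset, so $G\cap\PGL_2(k(E))$ sits inside the stabilizer of this subset, which is a finite extension of a one-dimensional subgroup of $\PGL_2$; the $p$-Jordan property for such subgroups of $\PGL_2(k(E))$ follows from Larsen--Pink. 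If $D_h=\emptyset$, then $D$ is a union of fibers and $S=\pi^{-1}(C)$ with $C=E\setminus F$ a punctured elliptic curve, so fiberwise automorphisms are morphisms $C\to\PGL_2$; for a finite $\Phi\subset\mathrm{Mor}(C,\PGL_2)$, evaluation at any point $c_0\in C$ has $p$-Jordan image in $\PGL_2(k)$ and a kernel consisting of morphisms $g\colon C\to\PGL_2$ of finite order with $g(c_0)=e$. Because $e$ is isolated in the closed subvariety $\{x^n=e\}\subset\PGL_2$ whenever $p\nmid n$, such $g$ take values in the unipotent subgroup $\mathbb{G}_a$, so the kernel is a $p$-group bounded by $|\Phi_p|$. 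Combining these bounds with the Jordan property of the image of $G$ in $\Aut(E)$ (itself a finite extension of $E(k)$) yields the desired $p$-Jordan bound on $G$.

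The main obstacle is precisely this exceptional case: since $\Bir(\bar{S})$ fails the $p$-Jordan property for $\bar{S}$ birational to $E\times\p^1$, the argument cannot remain purely birational and must combine the $\Aut(S)$-invariance of the boundary with the structure of torsion elements in $\PGL_2$ over the function field $k(E)$, then glue the resulting bounds through the short exact sequence produced by the ruling.
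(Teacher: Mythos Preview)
Your reduction via Theorem~\ref{thm_CS} to the case $S$ birational to $E\times\p^1$ is correct and matches the paper. The problem is in the exceptional case, where your argument has a genuine gap.

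The gap is the sentence ``Combining these bounds with the Jordan property of the image of $G$ in $\Aut(E)$ \dots\ yields the desired $p$-Jordan bound on $G$.'' The ($p$-)Jordan property is \emph{not} stable under group extensions: there exist finite groups $G$ with a normal subgroup $N$ such that both $N$ and $G/N$ are abelian while every abelian subgroup of $G$ has unbounded index (Heisenberg groups of order $n^3$). This failure of stability is exactly why $\Bir(E\times\p^1)$ is not Jordan despite sitting in the short exact sequence you write down. Your extra hypothesis in the case $D_h\neq\emptyset$ --- that $G$ fixes a finite set of points on the generic fibre $\p^1_{k(E)}$ --- constrains $G\cap\PGL_2(k(E))$ to lie in a Borel or a torus, but it does not control how this kernel interacts with the image in $\Aut(E)$; a Heisenberg extension of $E[n]$ by a cyclic group of order $n$ is still compatible with everything you have written. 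Notice that in this case your argument uses only the action on the generic fibre, i.e.\ only birational information, and therefore cannot possibly separate $\Aut(S)$ from the bad subgroups of $\Bir(\barS)$.

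The paper's route is different and uses regularity in an essential way. After arranging that $\pi\colon S\to E$ is surjective with all fibres smooth (otherwise the image of $\Aut(S)$ in $\Aut(E)$ fixes a nonempty finite set and is finite --- this is Lemma~\ref{lemma_fixed_Z_on_E}, and it is the \emph{easy} case, which you treat as the hard one with your evaluation argument), one takes a \emph{minimal} smooth projective closure $\barS$. Then $\barS\setminus S$ consists only of multisections; each of these dominates $E$ and is therefore non-rational, hence cannot be contracted by any birational self-map (Lemma~\ref{lemma_horizontal_curves_in_complement}). Consequently every $g\in\Aut(S)$ extends to a \emph{regular} automorphism of $\barS$, giving $\Aut(S)\hookrightarrow\Aut(\barS)$, and one concludes by Hu's Theorem~\ref{thm_Hu} for projective varieties. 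This replaces your extension argument entirely: rather than gluing bounds across the ruling, the paper promotes birational automorphisms to biregular ones and invokes the projective case.

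A smaller point: your claim that fibrewise automorphisms in the $D_h=\emptyset$ case are ``morphisms $C\to\PGL_2$'' presumes the $\p^1$-bundle is trivial over $C$; in general they are sections of the associated $\PGL_2$-torsor. But as noted, that case is dispatched much more simply by observing that the image in $\Aut(E)$ is finite.
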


The first part of the proof of this theorem is similar to that in \cite{Bandman-Zarhin_1}. It is a series of reductions leading us
to the situation when $S$ is a surface equipped with a surjective regular map $\pi\colon S\to E$ to an elliptic curve~$E$ such that
a general fiber of $\pi$ is $\p^1$. Then \cite{Bandman-Zarhin_1} shows that $\Aut(S)$ is a subgroup in $\Aut(\barS)$ for some smooth 
projective closure $\barS$ of $S$, then using the study \cite{Zarhin} of biregular automorphisms of projective ruled elliptic surfaces they get the result.

In the case of finite characteristic the analogue of the theorem proved in \cite{Zarhin} is unknown. 
In its place, we will use Theorem \ref{thm_Hu} in combination
with an analysis of the embedding $\Aut(S)\subset \Bir(\barS)$ and prove our theorem.

An important detail in our proof is the fact that if $\barS\setminus S$ contains 
a curve $C$ which maps dominantly to $E$ then any birational automorphism of $\barS$ does not contract $C$, see Lemma \ref{lemma_horizontal_curves_in_complement}. 
This implies that any regular
automorphism of~$S$ can be extended to a regular automorphism of a larger open subset of $\barS$ which contains an open subset of $C$.
This is true since the multisection $C$ cannot be a rational curve.

This is the reason why the proof of Theorem \ref{thm_aut_of_qp_surfaces} cannot be easily extended to dimension $3$
since in finite characteristic there exist several examples of unirational non-rational surfaces $A$, in Example \ref{ex_Fermat} we recall a construction of 
such surface introduced by T. Shioda. If the direct
product~\mbox{$\p^1\times A$} contains a rational surface which projection to $A$ is surjective, then Lemma \ref{lemma_horizontal_curves_in_complement}
fails for open subsets of the product $\p^1\times A$ and possibly this can lead to a construction of quasi-projective threefold which is not $p$-Jordan.

\medskip

{\bf Acknowledgements.} I am very grateful to my advisor, Constantin Shramov, for suggesting this
problem as well as for his patience and invaluable support.  This work is supported by Russian Science Foundation
under grant \textnumero 18-11-00121.

\section{Automorphisms of quasi-projective surfaces}
In this section we are going to prove Theorem \ref{thm_aut_of_qp_surfaces}. 
We will need the following well-known assertion which shows that the $p$-Jordan property holds for finite extensions of $p$-Jordan groups:
\begin{lemma}[{see, e.g., \cite[Lemma 2.8]{Chen-Shramov}}]\label{lemma_Jordan_subgroup}
 Assume that a group $\Gamma$ contains a $p$-Jordan subgroup $\Gamma'$ of finite index.
 Then $\Gamma$ is $p$-Jordan.
\end{lemma}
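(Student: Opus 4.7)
The plan is to combine three standard reductions. First, I would replace $\Gamma'$ by its normal core $\Gamma'' := \bigcap_{g\in\Gamma} g\Gamma' g^{-1}$, which is a normal subgroup of $\Gamma$ of finite index dividing $[\Gamma:\Gamma']!$. Since $\Gamma'' \subseteq \Gamma'$, every finite subgroup of $\Gamma''$ is already a finite subgroup of $\Gamma'$, so $\Gamma''$ inherits the $p$-Jordan property with the same constants $J' := J(\Gamma')$ and $e' := e(\Gamma')$. Thus I may assume from the outset that $\Gamma'$ itself is normal in $\Gamma$, with index $m := [\Gamma:\Gamma']$.

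Given an arbitrary finite subgroup $G \subseteq \Gamma$, I would then form $G' := G \cap \Gamma'$, which is a normal subgroup of $G$ of index at most $m$. Applying the $p$-Jordan property of $\Gamma'$ to $G'$ produces a normal abelian subgroup $A \triangleleft G'$ with
\[
[G' : A] \leqslant J' \cdot |G'_p|^{e'}.
\]
The main subtlety is that $A$ need not be normal in $G$. The standard remedy is to pass to
\[
A_0 := \bigcap_{g \in G} g A g^{-1}.
\]
Because $G'$ is normal in $G$, each conjugate $gAg^{-1}$ still lies inside $G'$, so $A_0$ is a subgroup of $A$ (hence abelian) and clearly normal in $G$. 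Moreover, conjugation by elements of $G'$ fixes $A$, so there are at most $[G:G'] \leqslant m$ distinct conjugates appearing in the intersection.

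To finish, I would estimate the index. Each of the at most $m$ conjugates has index at most $J' \cdot |G'_p|^{e'}$ in $G'$, so
\[
[G' : A_0] \leqslant \bigl(J' \cdot |G'_p|^{e'}\bigr)^m.
\]
Using $[G:G'] \leqslant m$ and the fact that $|G'_p|$ divides $|G_p|$ (since $|G'|$ divides $|G|$ and the $p$-part behaves well under division), I obtain
\[
[G : A_0] \leqslant m \cdot (J')^m \cdot |G_p|^{m e'},
\]
which establishes the $p$-Jordan property of $\Gamma$ with constants $J(\Gamma) = m \cdot (J')^m$ and $e(\Gamma) = m \cdot e'$. There is no genuine obstacle here; the only technical point to watch is keeping the conjugates $gAg^{-1}$ inside $G'$, which is precisely what the initial passage to the normal core guarantees.
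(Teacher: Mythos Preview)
Your argument is correct and follows the standard route: pass to the normal core so that $\Gamma'$ may be assumed normal, intersect $G$ with $\Gamma'$, apply the $p$-Jordan hypothesis inside $G'$, then intersect the (at most $m$) $G$-conjugates of the resulting abelian subgroup to obtain something normal in $G$. The index bound $[G':A_0]\leqslant (J'\,|G'_p|^{e'})^m$ is justified since each conjugate $gAg^{-1}$ lies in $G'$ (by normality of $G'$) and has the same index there as $A$, and the inequality $|G'_p|\leqslant |G_p|$ follows from $|G'|\mid |G|$.

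There is nothing to compare against: the paper does not supply a proof of this lemma but simply cites \cite[Lemma~2.8]{Chen-Shramov} as a reference for this well-known assertion. Your write-up is exactly the argument one finds behind that citation.
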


We consider an irreducible quasi-projective surface $S$. Denote by $\barS$ its projective closure. 
That is, $\barS$ is a projective surface such that $S$ embeds into $\barS$ as a dense subvariety.
Any automorphism of $S$ induces a birational automorphism of~$\barS$.
\begin{lemma}\label{lemma_except_P1xE}
 If $S$ is an irreducible surface which is not birational to a product  $\p^1\times E$ of a projective line and an elliptic curve~$E$,  then the group $\Aut(S)$ is $p$-Jordan.
\end{lemma}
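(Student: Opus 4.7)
The plan is simply to pass from $\Aut(S)$ to $\Bir(\barS)$ via the natural embedding and then invoke Theorem \ref{thm_CS}. The first ingredient is already stated in the paragraph preceding the lemma: every regular automorphism of $S$ extends uniquely to a birational self-map of $\barS$, giving an injective group homomorphism $\Aut(S)\hookrightarrow \Bir(\barS)$.

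The second ingredient is the observation that $\barS$, being birational to $S$, also fails to be birational to $\p^1\times E$. Hence Theorem \ref{thm_CS} applies to $\barS$ and yields that $\Bir(\barS)$ is $p$-Jordan.

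It remains to note that the $p$-Jordan property is inherited by arbitrary subgroups. Indeed, for a $p$-Jordan group $\Gamma$ with constants $J(\Gamma)$ and $e(\Gamma)$, and any subgroup $H\subset \Gamma$, every finite subgroup $G\subset H$ is in particular a finite subgroup of $\Gamma$, so there exists a normal abelian subgroup $A\trianglelefteq G$ with $[G:A]\leqslant J(\Gamma)\cdot |G_p|^{e(\Gamma)}$; note that $G_p$ is a Sylow $p$-subgroup of $G$ and depends only on $G$, not on the ambient group. Taking $\Gamma=\Bir(\barS)$ and $H=\Aut(S)$ finishes the proof.

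There is no real obstacle to this argument: the lemma is essentially a direct corollary of Theorem \ref{thm_CS}, serving as the first reduction step announced in the introduction. The substantive work of the paper lies in treating the excluded case when $S$ is birational to $\p^1\times E$, where Theorem \ref{thm_CS} provides no information about $\Bir(\barS)$ and one must analyse the embedding $\Aut(S)\subset \Bir(\barS)$ directly using Theorem \ref{thm_Hu} and the non-contraction property of horizontal curves in $\barS\setminus S$ mentioned in the introduction.
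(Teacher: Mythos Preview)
Your argument is correct and is exactly the paper's own proof: embed $\Aut(S)$ into $\Bir(\barS)$ and apply Theorem~\ref{thm_CS}. The paper states this in two lines without spelling out the inheritance of the $p$-Jordan property by subgroups, but the content is identical.
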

\begin{proof}
 Any automorphism of $S$ induces a birational automorphism of $\barS$. 
 Since the group $\Bir(\barS)$ is $p$-Jordan by Theorem \ref{thm_CS} we get the result.
\end{proof}
Now we study quasi-projective surfaces $S$ which are birational to the product $\p^1\times E$ of a projective line and an elliptic curve~$E$. 
Denote by $\pi\colon S\to E$ the projection to $E$. It is the Albanese map; thus it is regular.
Finite subgroups of automorphisms of such surfaces can be represented as extensions of two groups.
\begin{lemma}\label{lemma_exact_sequence}
 Assume that $S$ is a smooth quasi-projective surface over a field $k$ birational to $\p^1\times E$, here~$E$ is a smooth curve of positive genus
 and $\pi\colon S\to E$ is a projection. Then there exists an exact sequence:
 \begin{equation}\label{eq_exact_sequence}
  1\to \Bir(S)_{\pi} \to \Bir(S) \to \Gamma,
 \end{equation}
 where $\Bir(S)_{\pi}\subset \PGL(2, k(C))$ and $\Gamma\subset \Aut(E)$. Moreover, the group $\Bir(S)_{\pi}$ is $p$-Jordan.
\end{lemma}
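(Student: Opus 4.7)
The plan is to build the homomorphism $\Bir(S)\to\Aut(E)$ induced by $\pi$, to identify its kernel with a subgroup of $\PGL(2,k(E))$ acting on the generic fiber, and then to deduce the $p$-Jordan property from Larsen--Pink. First, for $\phi\in\Bir(S)$, a general fiber $F_t=\pi^{-1}(t)$ is birational to $\p^1$, so the restriction of $\pi\circ\phi\colon S\dashrightarrow E$ to $F_t$ is a rational map from a rational curve to $E$, which must be constant because $E$ has positive genus. Therefore $\pi\circ\phi$ is constant on the fibers of $\pi$, and factors as $\pi\circ\phi=\bar\phi\circ\pi$ for a dominant rational map $\bar\phi\colon E\dashrightarrow E$; this extends to a morphism since $E$ is a smooth projective curve. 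Applying the same argument to $\phi^{-1}$ produces a two-sided inverse, so $\bar\phi\in\Aut(E)$, and $\phi\mapsto\bar\phi$ is a group homomorphism whose image I take to be $\Gamma$.

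Second, I would identify the kernel $\Bir(S)_\pi$ of this homomorphism. Every $\phi\in\Bir(S)_\pi$ restricts to a birational self-map of the generic fiber $S_\eta$ over $k(E)$. Since $S$ is birational to $\p^1\times E$ with $\pi$ matching the projection to $E$ (both being the Albanese map), one has $k(S)\cong k(E)(t)$ and $S_\eta$ is birational to $\p^1_{k(E)}$; hence any birational $k(E)$-self-map of $S_\eta$ extends uniquely to an element of $\Aut_{k(E)}(\p^1_{k(E)})=\PGL(2,k(E))$. The resulting homomorphism $\Bir(S)_\pi\hookrightarrow\PGL(2,k(E))$ is injective because a birational self-map is determined by the induced $k(E)$-automorphism of $k(S)$.

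Finally, the $p$-Jordan property of $\Bir(S)_\pi$ follows from Larsen--Pink applied to $\GL_2$ over the field $k(E)$ of characteristic $p$: this yields that $\GL_2(k(E))$ is $p$-Jordan, hence so is its quotient $\PGL(2,k(E))$, and hence so is the subgroup $\Bir(S)_\pi$. The most delicate step, I expect, is justifying that \cite{Larsen_Pink} applies over the non-perfect function field $k(E)$ rather than only over $\overline{\mathbb{F}}_p$; this typically proceeds by noting that any finite subgroup of $\GL_n(k(E))$ is already defined over a finitely generated subring and reduces faithfully (on the prime-to-$p$ part) to $\GL_n(\overline{\mathbb{F}}_p)$ modulo a well-chosen maximal ideal, with the $p$-part controlled uniformly as in Definition~\ref{def_p-Jordan}.
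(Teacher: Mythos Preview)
Your argument is essentially the standard one; the paper itself does not give a proof but simply refers to \cite[Corollary~2.14, Lemma~4.16]{Chen-Shramov}, where precisely this construction (descent to $\Aut(E)$ via the Albanese fibration, identification of the kernel with automorphisms of the generic fiber $\p^1_{k(E)}$, and the $p$-Jordan property via Larsen--Pink) is carried out. So your route and the paper's are the same in spirit.

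Two small points. First, the step ``$\GL_2(k(E))$ is $p$-Jordan, hence so is its quotient $\PGL(2,k(E))$'' is not automatic: the $p$-Jordan property is not known to pass to arbitrary quotients, because a finite subgroup of $\PGL_2(k(E))$ need not lift to a finite subgroup of $\GL_2(k(E))$, and its full preimage is infinite. The clean fix is to observe that $\PGL_2$ embeds into $\GL_3$ (via the action on binary quadratic forms) or into $\GL_4$ (via conjugation on $2\times 2$ matrices), so $\PGL_2(k(E))\hookrightarrow\GL_N(k(E))$ and Larsen--Pink applies directly. Second, your worry about the non-perfect field $k(E)$ and the reduction to $\overline{\mathbb{F}}_p$ is unnecessary: Larsen--Pink is stated for linear algebraic groups over an arbitrary field, and in any case one may simply embed a finite subgroup of $\GL_N(k(E))$ into $\GL_N(\overline{k(E)})$ and apply the result over the algebraically closed field. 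With these adjustments your proof is complete.
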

\begin{proof}
 See, for instance, \cite[Corollary 2.14, Lemma 4.16]{Chen-Shramov}.
\end{proof}

Now we show that in several cases the group of automorphisms of a rationally connected fibration over an elliptic curve
happens to be $p$-Jordan. Here we consider the case when $\Aut(S)$ fixes a finite subset of fibers of $\pi$.
\begin{lemma}\label{lemma_fixed_Z_on_E}
 Assume that $S$ is a smooth quasi-projective  surface birational to $\p^1\times E$ and~$\pi\colon S\to E$ is the projection.
 Denote by $\Gamma$ the group of automorphisms of $E$ induced by $\Aut(S)$.
 If there exists a finite subset $Z\subset E$ such that $\Gamma$ preserves $Z$, then the group $\Aut(S)$ is $p$-Jordan.
\end{lemma}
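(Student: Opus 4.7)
The plan is to produce a finite-index subgroup of $\Aut(S)$ which is $p$-Jordan, and then appeal to Lemma \ref{lemma_Jordan_subgroup}.

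First I would set up the relevant homomorphism. Because $\pi\colon S\to E$ is the Albanese map, it is canonical and hence equivariant for automorphisms: every $f\in\Aut(S)$ descends to a well-defined automorphism $\phi(f)\in\Aut(E)$, giving a homomorphism $\phi\colon\Aut(S)\to\Aut(E)$ whose image is, by definition, the group $\Gamma$ in the statement. Let $H=\ker\phi$, so that $H$ consists of those automorphisms of $S$ which preserve every fiber of $\pi$. By restriction to the generic fiber (which is $\mathbb{P}^1_{k(E)}$ since $S$ is birational to $\mathbb{P}^1\times E$) the subgroup $H$ sits inside $\Bir(S)_{\pi}\subset \PGL(2,k(E))$. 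By Lemma \ref{lemma_exact_sequence} the ambient group $\Bir(S)_{\pi}$ is $p$-Jordan, and passing to subgroups preserves the $p$-Jordan property, so $H$ is $p$-Jordan.

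The remaining task is to show that $[\Aut(S):H]=|\Gamma|$ is finite. Here I would use the hypothesis that $\Gamma$ preserves a finite (necessarily nonempty, else the hypothesis is vacuous) subset $Z\subset E$. The action of $\Gamma$ on $Z$ yields a homomorphism $\Gamma\to\Sym(Z)$, whose image has order at most $|Z|!$; call its kernel $\Gamma'$, so $[\Gamma:\Gamma']<\infty$. Fix any $z_0\in Z$. Since every element of $\Gamma'$ fixes $z_0$, we have $\Gamma'\subset\Aut(E,z_0)$. But the stabilizer of a point in $\Aut(E)$ is isomorphic to the automorphism group of the pointed elliptic curve $(E,0)$, which is finite. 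Hence $\Gamma'$ is finite, and therefore so is $\Gamma$.

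Combining the two steps: $H$ is a $p$-Jordan subgroup of $\Aut(S)$ of finite index, so Lemma \ref{lemma_Jordan_subgroup} gives that $\Aut(S)$ itself is $p$-Jordan. There is no real obstacle in this argument; the only nontrivial ingredient beyond what has been assembled in the lemmas above is the finiteness of the stabilizer of a point on $E$ inside $\Aut(E)$, which is classical.
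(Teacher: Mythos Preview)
Your proof is correct and follows essentially the same approach as the paper: both show that the image $\Gamma\subset\Aut(E)$ is finite by using that the stabilizer of a point of $Z$ in $\Aut(E)$ is the finite automorphism group of a pointed elliptic curve, and then conclude via Lemma~\ref{lemma_Jordan_subgroup} since the kernel sits inside the $p$-Jordan group $\Bir(S)_\pi$. The only cosmetic difference is that the paper applies orbit--stabilizer directly to get $|\Gamma|\leqslant 24\,|Z|$, whereas you first pass to the kernel of $\Gamma\to\mathrm{Sym}(Z)$, yielding the looser bound $|\Gamma|\leqslant |Z|!\cdot|\Aut(E,0)|$; either bound suffices.
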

\begin{proof}
 Since $\Gamma$ preserves $Z$ there exists a subgroup $H$ of $\Gamma$ of index $|Z|$ which stabilizes a point $z\in Z$.
 The group~$H$ acts faithfully on $E$; thus, it is finite and $|H|\leqslant 24$ by \cite[Exercise A.1(b)]{Silverman}. 
 Therefore,~\mbox{$|\Gamma|\leqslant 24|Z|$.}
 
 Thus, by Lemma \ref{lemma_exact_sequence} The group $\Aut(S)$ contains a $p$-Jordan subgroup $\Aut(S)_{\pi}$ of finite index.
 Therefore,~$\Aut(S)$ is also $p$-Jordan by Lemma \ref{lemma_Jordan_subgroup}.
\end{proof}

Starting with a smooth quasi-projective surface $S$ birational to $\p^1\times E$ we can choose a good projective closure of $S$.

\begin{lemma}\label{lemma_smooth_projective_closure}
 Assume that $S$ is a smooth quasi-projective surface birational to $\p^1\times E$ over an algebraically closed field $k$, 
 where $E$ is an elliptic curve and $\pi\colon S\to E$ is a projection.
 Then there exists a smooth projective variety $\barS$ and a open embedding $\iota\colon S\hookrightarrow \barS$ such that 
 $\pi$ induces a morphism $\overline{\pi}$ from $\barS$ to $E$:
 \begin{equation*}
  \xymatrix{
  S \ar@{^{(}->}[r]^{\iota} \ar[d]_{\pi} & \barS \ar[ld]^{\overline{\pi}} \\ E
  }
 \end{equation*}
\end{lemma}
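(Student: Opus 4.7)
The plan is to construct $\barS$ in two stages: first take the graph closure of $\pi$ inside a projective model, and then resolve its singularities away from $S$.

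I start by fixing any projective closure $Y_0 \supset S$. The morphism $\pi$ then extends to a rational map $\pi_0 \colon Y_0 \dashrightarrow E$. Let $Y_1 \subset Y_0 \times E$ be the closure of the graph of $\pi_0$; it is a projective surface for which the two projections give a proper birational morphism $p \colon Y_1 \to Y_0$ and a regular morphism $q \colon Y_1 \to E$. Over $S$ the map $p$ is bijective (its fibre over $s \in S$ is the single point $(s,\pi(s))$), so by Zariski's main theorem it is an isomorphism above $S$. The graph of $\pi$ therefore identifies $S$ with an open subvariety of $Y_1$ on which $q$ restricts to $\pi$.

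Next, to smoothen $Y_1$ I invoke resolution of singularities for excellent two-dimensional schemes, which is available in every characteristic (Abhyankar, Lipman). It produces a smooth projective surface $\barS$ together with a proper birational morphism $r \colon \barS \to Y_1$ that is an isomorphism over the smooth locus of $Y_1$. Since $S$ is smooth and lies in that smooth locus, the preimage $r^{-1}(S)$ maps isomorphically onto $S$, which gives the required open embedding $\iota \colon S \hookrightarrow \barS$. The composition $\overline{\pi} := q \circ r \colon \barS \to E$ is then a morphism extending $\pi$, as demanded by the diagram.

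There is essentially no serious obstacle beyond citing the correct black boxes: the existence of a projective closure, the graph-closure construction to kill the indeterminacy of $\pi_0$, and resolution of surface singularities in positive characteristic. The conceptual content is simply that taking the graph closure trades the rational map $\pi_0$ for a genuine morphism at the cost of introducing singularities in the source, and these singularities necessarily lie outside $S$ and can be resolved without disturbing it.
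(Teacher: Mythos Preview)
Your argument is correct, but it differs from the paper's in one notable way. The paper does not use the graph closure at all: it simply takes an arbitrary projective closure, normalises, and resolves by blow-ups to obtain a smooth projective $\barS$ containing $S$; then it observes that the rational map $\barS\dashrightarrow E$ is automatically a morphism because it is the Albanese map of the smooth projective surface $\barS$. In other words, the paper handles the extension of $\pi$ \emph{after} desingularisation, invoking the universal property of the Albanese variety, whereas you force regularity of the map to $E$ \emph{before} desingularisation via the graph closure. Your route is more elementary and works for any target curve $E$, not just an abelian variety; the paper's route is shorter but uses a structural fact specific to the situation at hand (that $E$ is the Albanese of $\barS$). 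Both are perfectly valid proofs of the lemma.
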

\begin{proof}
 Consider some projective closure $\barS'$ of $S$.
 We normalize $\barS'$ and then after a sequence of blow-ups we obtain a smooth model $\barS$.
 Since $S$ is a smooth dense subset in $\barS'$ it does not intersect centers of blow-ups. Thus, $\barS$ is a projective closure of $S$.
 
 The induced 
 map $\overline{\pi}\colon \barS\to E$ is regular 
 since $\overline{\pi}$ is the Albanese map.
\end{proof}
The complement $\barS\setminus S$ can contain multisections of $\overline{\pi}$. However, any automorphism of $S$ cannot contract such multisection.

\begin{lemma}\label{lemma_horizontal_curves_in_complement}
Assume that $S$ is a smooth quasi-projective surface birational to $\p^1\times E$ over an algebraically closed field $k$, 
where $E$ is a smooth projective curve of positive genus 
and $\pi\colon S\to E$ is a projection.
If $C$ is a curve in $S$ such that ${\pi}(C)$ is a dense subset in $E$, then for any birational automorphism $g$ of $S$ the curve~$C$ does not lie in the exceptional locus of $g$.
\end{lemma}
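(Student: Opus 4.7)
The approach is to combine the factorization supplied by Lemma~\ref{lemma_exact_sequence} with the fact that a single point cannot be dense in the positive-genus curve $E$. The key observation is that every birational self-map of $S$ respects the projection $\pi$ up to an automorphism of the base.

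By Lemma~\ref{lemma_exact_sequence} applied to $g \in \Bir(S)$, there exists an automorphism $\alpha_g \in \Aut(E)$ (the image of $g$ in $\Gamma$) satisfying
\begin{equation*}
 \pi \circ g = \alpha_g \circ \pi
\end{equation*}
as rational maps $S \dashrightarrow E$. Since $\pi$ is a morphism and $g$ is regular on its domain of definition, this identity is in fact an equality of morphisms on that domain.

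Suppose for contradiction that $C$ lies in the exceptional locus of $g$. Let $U \subset S$ denote the open locus on which $g$ is a morphism; since $S$ is a smooth surface, $S \setminus U$ is a finite set of points, so $V := C \cap U$ is dense in $C$. The contraction hypothesis means $g(V)$ is a single point $q \in S$, whence $\pi(g(V)) = \{\pi(q)\}$ consists of one point of $E$. On the other hand, the factorization gives $\pi(g(V)) = \alpha_g(\pi(V))$. Because $V$ is dense in $C$ and $\pi(C)$ is dense in $E$ by hypothesis, the set $\pi(V)$ is dense in $E$, and since $\alpha_g$ is an automorphism of $E$ the same holds for $\alpha_g(\pi(V))$. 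But a single point cannot be dense in $E$, as $E$ is not a point—its genus is positive. This contradiction completes the proof.

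The argument has essentially one delicate point, which is mild: one must be careful that the identity $\pi \circ g = \alpha_g \circ \pi$, supplied by Lemma~\ref{lemma_exact_sequence} as an equality of rational maps, can be evaluated on $V$. This is automatic once we shrink $U$ (and hence $V$) so that both sides are regular there, without losing density of $V$ in $C$ or of $\pi(V)$ in $E$. No deeper obstacle is anticipated.
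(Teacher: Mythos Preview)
Your argument is correct, but it proceeds along a different line from the paper's. The paper observes that since $C$ dominates the positive-genus curve $E$, Hurwitz-type considerations (\cite[IV.2.4--2.5]{Hartshorne}) force $C$ to be non-rational; it then invokes the standard fact that any curve contracted by a birational map of smooth surfaces is rational (via factorization into blow-ups), so $C$ cannot be exceptional. You instead exploit the equivariance of the Albanese projection supplied by Lemma~\ref{lemma_exact_sequence}: because $g$ covers an automorphism $\alpha_g$ of $E$, a contracted curve would have to project to a single point of $E$, contradicting the density of $\pi(C)$. Your route is arguably more self-contained, since it avoids appealing to the rationality of exceptional curves, while the paper's route is the more classical surface-theoretic statement and makes the underlying geometric obstruction (non-rationality of multisections) explicit---which is exactly the feature the paper later points to when discussing why the argument breaks in dimension three. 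One cosmetic point: in your final line, the contradiction comes simply from $E$ being a positive-dimensional variety, not from its genus; the positive genus is used earlier, to ensure that Lemma~\ref{lemma_exact_sequence} applies and $\alpha_g$ exists.
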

\begin{proof}
 Let $C\subset S$ be an irreducible curve such that ${\pi}(C) = E$.
 Then $C$ dominates a curve of positive genus and by \cite[Corollary IV.2.4]{Hartshorne} and \cite[Proposition IV.2.5]{Hartshorne} the curve $C$ cannot be rational.
 Thus, $C$ is not contractible.
\end{proof}

There is many different ways to choose a smooth projective closure of $S$. Some of them are very useful:

\begin{defin}
 Assume that $\barS$ is a smooth quasi-projective surface birational to $\p^1\times E$  over an algebraically closed field $k$, 
 where $E$ is an elliptic curve and $\pi\colon \barS\to E$ is a projection.
 Let $\barS$ be a smooth projective closure of $S$. 
 We say that it is \emph{minimal} if there is no $(-1)$-curve $C$ in the complement $\barS\setminus S$ lying in fiber of~$\overline{\pi}$.
\end{defin}
By the following lemma any smooth ruled quasi-projective surface $S$ admits a minimal projective closure.
\begin{lemma}\label{lemma_minimal_closure}
  Assume that $S$ is a smooth quasi-projective surface birational to $\p^1\times E$  over an algebraically closed field $k$, where $E$ is an elliptic curve and $\pi\colon S\to E$ is a projection.
  Then there exists a minimal projective closure of $S$. Moreover, if all fibers of $S$ are smooth, then the same is true for $\barS$.
\end{lemma}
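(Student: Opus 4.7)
The plan is to start from the smooth projective closure $\barS_0\supset S$ provided by Lemma~\ref{lemma_smooth_projective_closure}, together with its extended morphism $\overline{\pi}_0\colon\barS_0\to E$, and then iteratively contract $(-1)$-curves lying in $\barS_0\setminus S$ and in fibers of $\overline{\pi}_0$. Given such a curve $C$, Castelnuovo's contraction theorem produces a smooth birational morphism $\varphi\colon\barS_0\to\barS_1$ collapsing $C$ to a point. Three observations make this step behave as required. First, since $\varphi$ is an isomorphism off $C$ and $C\cap S=\emptyset$, the composition $S\hookrightarrow\barS_0\setminus C\xrightarrow{\varphi}\barS_1$ exhibits $S$ as an open subscheme of $\barS_1$. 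Second, since $C$ lies in a single fiber of $\overline{\pi}_0$, the morphism $\overline{\pi}_0$ descends through $\varphi$ to a morphism $\overline{\pi}_1\colon\barS_1\to E$ extending $\pi$. Third, the Picard rank strictly drops at each step, so the procedure terminates after finitely many iterations at a minimal projective closure $\barS$ of $S$.

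For the ``moreover'' assertion, assume all fibers of $\pi$ are smooth and suppose for contradiction that the minimal $\barS$ has a reducible fiber $F=\overline{\pi}^{-1}(e)$. Since $\barS$ is a smooth projective ruled surface over $E$ birational to $\p^1\times E$, the fiber $F$ is a tree of smooth rational curves, each appearing with multiplicity one; consequently, every leaf of this tree has self-intersection~$-1$. Minimality of $\barS$ forces every leaf $C$ of $F$ to satisfy $C\cap S\neq\emptyset$, since otherwise $C\subset\barS\setminus S$ would be a $(-1)$-curve in a fiber. On the other hand, the unique node where $C$ meets the remaining components of $F$ is a singular point of $F$, and the smoothness of $\pi^{-1}(e)=F\cap S$ forces this node to lie outside $S$. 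Since any tree with at least two vertices has at least two leaves, this produces two disjoint nonempty open subsets of $\pi^{-1}(e)$, one contained in each leaf, contradicting the connectedness of the smooth (irreducible) fiber $\pi^{-1}(e)$.

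The main obstacle is the ``moreover'' part: one has to leverage the combinatorial tree structure of reducible fibers of ruled surfaces, together with the fact that leaves of multiplicity one are always $(-1)$-curves, to turn minimality into the conclusion that fibers must be irreducible. The existence part itself is a routine iteration, which works precisely because the class of contractible curves chosen — those lying in $\barS\setminus S$ and in fibers of $\overline{\pi}$ — is exactly what is needed in order to preserve both the open subscheme structure on $S$ and the morphism to $E$ at each step.
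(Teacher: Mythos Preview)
Your existence argument is essentially the paper's: iterate Castelnuovo contractions on $(-1)$-curves in $\barS\setminus S$ lying in fibers; each step preserves the open embedding of $S$ and the map to $E$, and the Picard rank drops, so the process terminates.

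There is a genuine gap in the ``moreover'' part. Your claim that every component of a reducible fiber $F$ appears with multiplicity one is false. Blow up a point on a smooth fiber of a $\p^1$-bundle over $E$, obtaining two $(-1)$-curves meeting at a node; then blow up that node. The resulting fiber is a chain $C_1+2E_0+C_2$ with $C_1^2=C_2^2=-2$ and $E_0^2=-1$: the unique $(-1)$-curve sits in the middle with multiplicity~$2$, while both leaves are $(-2)$-curves. Hence your deduction that every leaf is a $(-1)$-curve---and therefore meets $S$ by minimality---breaks down, and with it the connectedness contradiction.

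What actually makes the step work (and what the paper's terse phrase ``two $(-1)$-curves corresponding to hanging vertices'' is gesturing at) is the following. Since $\pi^{-1}(e)=S\cap F$ is smooth, reduced and irreducible, $S$ meets at most one component $C_0$ of $F$, necessarily of multiplicity~$1$. One then checks that a reducible fiber of a $\p^1$-fibration on a smooth surface always contains a $(-1)$-curve different from any prescribed multiplicity-one component: either there are at least two $(-1)$-curves, or the unique $(-1)$-curve has multiplicity $\geqslant 2$ (as in the example above), hence is not $C_0$. Either way there is a $(-1)$-curve in $F\cap(\barS\setminus S)$, contradicting minimality. Supplying this dichotomy repairs your argument.
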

\begin{proof}
 In \cite[Section 4]{Fujita} the same assertion was proved in characteristic 0. In fact all arguments from that paper work in positive characteristic.

 By Lemma \ref{lemma_smooth_projective_closure} we consider some smooth projective closure $\overline{\pi}\colon \barS\to E$ of $S$.
 If the complement $\barS\setminus S$ contains a $(-1)$-curve lying in fiber of $\overline{\pi}$, then it can be blown down. 
 Thus, we obtain a new smooth projective closure $\overline{\pi}\colon\barS'\to E$ of $S$ and the number of codimension 1 
 components of $\barS'\setminus S$ is less then one of~$\barS\setminus S$. Repeating this process several times we can construct
 a smooth projective closure $\overline{\pi}_0\colon\barS_0\to E$ of~$S$ such that $\barS_0\setminus S$ does not contain $(-1)$-curves 
 lying in fibers of $\overline{\pi}_0$.
 
 If some fiber $F$ of $\overline{\pi}_0$ is singular, then $F$ is reducible, its components are rational curves and the dual graph is a tree. 
 In particular it contains two $(-1)$-curves corresponding to hanging verteces of the dual graph of $F$. If we assume that all fibers of $S$ are 
 smooth irreducible and reduced this implies that $\barS_0\setminus S$ contains a $(-1)$-curve lying in the fiber of $\overline{\pi}_0$. 
 Thus, all fibers of $\overline{\pi}_0$ are smooth.
\end{proof}

Now we are ready to prove Theorem \ref{thm_aut_of_qp_surfaces}.

\begin{proof}[Proof of Theorem \textup{\ref{thm_aut_of_qp_surfaces}}]
We can assume that $k$ is algebraically closed.
If the quasi-projective surface $S$ is not irreducible, we consider its irreducible component $S_0$. By Lemma \ref{lemma_Jordan_subgroup} the group $\Aut(S)$ 
is $p$-Jordan if $\Aut(S_i)$ is $p$-Jordan for all irreducible components~\mbox{$S_1,\dots, S_m$} of $S$. Thus, from now on we assume that $S$ is irreducible.

By Lemma \ref{lemma_except_P1xE} we reduce to the case when $S$ is birational to $\p^1\times E$ since otherwise $\Aut(S)$ is a $p$-Jordan group.

Any regular automorphism of $S$ can be lifted to its normalization; thus, we can assume that $S$ is normal.
Replacing $S$ by its minimal resolution of singularities we can assume that it is smooth. The group of automorphisms of $S$ embeds to the group of automorphism
of the resolution. Thus, we can assume that $S$ is smooth.

Since $S$ is birational to $\p^1\times E$ we can consider the projection $\pi\colon S \to E$. 
It is regular since it is the Albanese map from a smooth variety.

If $\pi(S)$ is a proper subset  or if there are non-reduced or reducible fibers on $S$, then by Lemma \ref{lemma_fixed_Z_on_E} we get that $\Aut(S)$ is $p$-Jordan. 
From now on we assume that all fibers of $S$ are smooth and $\pi(S) = E$.

By Lemma \ref{lemma_minimal_closure} we can construct a minimal closure $\barS$ of
$S$ such that all fibers of $\barS$ are smooth and irreducible. 
Since $\barS$ is minimal and $\pi(S) = E$, then the complement $\barS\setminus S$ is a set of curves which do not lie in fibers of $\overline{\pi}$.
If $g\in\Aut(S)$, then it induces a birational automorphism $\overline{g}\in\Bir(\barS)$. The exceptional locus of~$\overline{g}$ lies in $\barS\setminus S$.
Thus, by Lemma \ref{lemma_horizontal_curves_in_complement} we get that the exceptional locus of $\overline{g}$ is empty.
Then $g$ induces a regular automorphism of $\barS$.

We get that $\Aut(S)$ is embedded to $\Aut(\barS)$ for a minimal projective closure $\barS$ of $S$. By Theorem \ref{thm_Hu} we get that $\Aut(S)$ is $p$-Jordan.
\end{proof}

 The proof of Theorem \ref{thm_aut_of_qp_surfaces} can not be easily modified to the case of quasi-projective threefolds. 
 The main problem is that Lemma~\ref{lemma_horizontal_curves_in_complement} may fail for $\p^1$-fibration over
 non-rational surfaces, since there are examples of non-rational unirational surfaces in finite characteristic. 
 Here we recall a construction of such surface.

  \begin{example}\label{ex_Fermat}
  We recall a construction of a unirational general type surface discovered by T. Shioda~\cite{Shioda_1}.
  The surface $A$ is a Fermat surface lying in $\p^3$ over an algebraically closed field $k$ of characteristic $p$:
  \begin{equation*}
   A = \left\{\left. (x_0:x_1:x_2:x_3)\in\p^3\ \right|\ x_0^n + x_1^n + x_2^n +x_3^n = 0 \right\}
  \end{equation*}
  We choose $n\geqslant 5$ to be such that there exists $\nu$ and $p^{\nu}\equiv -1 \mod{n}$.
  Then we consider the following finite cover:
  \begin{equation*}
   f\colon \Spec\left(k[x_1,x_2,x_3,(x_1+x_2)^{\frac{1}{p}}]/( x_1^n + x_2^n +x_3^n +1)\right) \to \Spec\left(k[x_1,x_2,x_3]/( x_1^n + x_2^n +x_3^n +1)\right) \subset A.
  \end{equation*}
 The affine variety on the left hand side is rational by \cite[Proposition 1]{Shioda_1}, we denote its projective closure by $T$. 
 The right hand side is an open subset of $A$. 
 
 This allows us to embed the rational surface $T = \Spec\left(k[x_1,x_2,x_3,(x_1+x_2)^{\frac{1}{p}}]/( x_1^n + x_2^n +x_3^n +1)\right)$ to 
 the product $X = \p^1\times A$. 
 Note that $\Spec\left(k[x_1,x_2,x_3,y]/( x_1^n + x_2^n +x_3^n +1)\right)$ is an affine chart in $X$.
 Then the embedding $\iota\colon T\to X$ is defined by the homomorphism of algebras
 \begin{equation*}
  \iota^*\colon k[x_1,x_2,x_3,y]/( x_1^n + x_2^n +x_3^n +1) \to k[x_1,x_2,x_3,(x_1+x_2)^{\frac{1}{p}}]/( x_1^n + x_2^n +x_3^n +1),
  \end{equation*}
  which is defined by the following formula:
  \begin{equation*}
  \iota^*(y) = (x_1+x_2)^{\frac{1}{p}}.
 \end{equation*}
 \end{example}
  
 Other examples of unirational non-rational surfaces in finite characteristic can be found in these 
 papers:~\cite{Katsura}, \cite{Katsura_Schutt}, \cite{Shioda_Katsura}, \cite{Miyanishi_1}, \cite{Miyanishi_2}, \cite{Ohhira}, \cite{Rudakov_Shafarevich}, 
 \cite{Shioda_2}, \cite{Shioda_3}.

\bibliographystyle{alpha}
\bibliography{Jordan}
 
\end{document}